\theoremstyle{plain}
\newtheorem{thm}{Theorem}[section]
\newtheorem{conj}[thm]{Conjecture}
\newtheorem*{thm*}{Theorem}
\theoremstyle{definition}
\newtheorem{ex}[thm]{Example}
\newtheorem{rem}[thm]{Remark}
\newcommand{\R}{\mathbb{R}}
\DeclareMathOperator{\conv}{\mathrm{conv}}
\DeclareMathOperator{\aff}{\mathrm{aff}}
\begin{document}

\title[On Reay's conjecture and Conway's thrackle conjecture]{On Reay's relaxed Tverberg conjecture and generalizations of Conway's thrackle conjecture}



\author[M. Asada \and R. Chen \and F. Frick \and F. Huang \and M. Polevy \and D. Stoner \and L.H. Tsang \and Z. Wellner]{Megumi Asada \and Ryan Chen \and Florian Frick \and Frederick Huang \and Maxwell Polevy \and David Stoner \and Ling Hei Tsang \and Zoe Wellner}

\address[MA]{Department of Mathematics and Statistics, Williams College, Williamstown, MA 01267, USA}
\email{maa2@williams.edu}

\address[RC]{Department of Mathematics, Princeton NJ 08544, USA}
\email{rcchen@princeton.edu}

\address[FF, FH, ZW]{Department of Mathematics, Cornell University, Ithaca, NY 14853, USA}
\email{\{ff238, fh243, zaw5\}@cornell.edu}

\address[MP]{Department of Mathematics, Northeastern University, Boston, MA 02115, USA}
\email{polevy.m@husky.neu.edu}

\address[DS]{Mathematics Department, Harvard University, 1 Oxford St, Cambridge, MA 02138, USA}
\email{dstoner@college.harvard.edu}

\address[LHT]{Department of Mathematics, The Chinese University of Hong Kong}
\email{henryt918@gmail.com}

\date{\today}
\maketitle


\begin{abstract}
\small Reay's relaxed Tverberg conjecture and Conway's thrackle conjecture are open problems
about the geometry of pairwise intersections. Reay asked for the minimum number of points in
Euclidean $d$-space that guarantees any such point set admits a partition into $r$ parts, any $k$
of whose convex hulls intersect. Here we give new and improved lower bounds for
this number, which Reay conjectured to be independent of~$k$. We prove a colored version of 
Reay's conjecture for $k$ sufficiently large, but nevertheless $k$ independent of dimension~$d$. 
Requiring convex hulls to intersect pairwise severely restricts combinatorics. 
This is a higher-dimensional analog of Conway's thrackle conjecture or its linear special case.
We thus study convex-geometric and higher-dimensional analogs of the thrackle conjecture alongside
Reay's problem and conjecture (and prove in two special cases) that the number of convex sets 
in the plane is bounded by the total number of vertices they involve whenever there exists a 
transversal set for their pairwise intersections. We thus isolate a geometric property that leads to
bounds as in the thrackle conjecture. We also establish tight bounds for the number of facets
of higher-dimensional analogs of linear thrackles and conjecture their continuous generalizations.
\end{abstract}

\section{Introduction}

\noindent
Given a finite point set in~$\R^d$ the intersection pattern of convex hulls determined by subsets of those points is the focus of \emph{Tverberg-type theory}.
The namesake of the area, Helge Tverberg, established in 1966 that for any ${(r-1)(d+1)+1}$ points in~$\R^d$ there exists a partition 
into $r$ parts $X_1, \dots, X_r$ such that $\conv X_1 \cap \dots \cap \conv X_r \ne \emptyset$, and this number of points is optimal in general~\cite{tverberg1966}. 
Since then a multitude of extensions and variants of this result have been proven; see for instance the recent survey article~\cite{barany2016}.

Many seemingly simple questions of Tverberg-type remain open --- among them a conjecture of Reay~\cite{reay1979}: for any $r \ge 2$ and $d \ge 1$ 
there are $(r-1)(d+1)$ points in~$\R^d$ such that for any partition of them into $r$ parts, two of them have disjoint convex hulls. This
would imply that there is no relaxation of Tverberg's theorem, where fewer than $(r-1)(d+1)+1$ points can be partitioned into $r$ sets
of pairwise intersecting convex hulls. More generally, this problem has been studied for $k$-fold intersections among the $r$ convex hulls
instead of only pairwise intersections. This was done already by Reay and later by Perles and Sigron~\cite{perles2007}.

Reay's problem seeks to understand the pairwise intersection pattern of disjoint faces in a simplicial complex~$K$ when affinely mapped to Euclidean space. Conversely,
if we are given that all facets have nonempty pairwise intersections, how does this restrict the possible combinatorics of~$K$? In the special
case of graphs this would be answered by Conway's thrackle conjecture: a \emph{thrackle} is a graph that can be drawn in the plane in such a 
way that any pair of edges intersects precisely once, either at a common vertex or a transverse intersection point. Conway conjectured 
that in any thrackle the number of edges is at most the number of vertices. This has remained open but is simple to prove if all edges 
are required to be straight line segments, that is convex; see Erd\H os~\cite{erdos1946}. 
It is an open question whether one needs to distinguish between the affine and continuous theory for thrackles;
this distinction is significant for Tverberg-type results~\cite{blagojevic2015-2, frick2015, mabillard2015}.
Not wanting to restrict our attention to $1$-dimensional objects, we set out to find convex-geometric
and higher-dimensional analogs of Conway's thrackle conjecture as a true counterpart of Reay's problem. 

\textbf{Our contributions.} Denote by $T(d,r,k)$ the minimum number~$n$ such that any~$n$ points $a_1,\ldots,a_n$ in~$\R^d$ (not necessarily distinct) admit a partition of the indices $\{1,\ldots,n\}$ into~$r$ pairwise disjoint sets $I_1, \dots, I_r$ such that any size~$k$ subfamily of
$\{\conv(a_i)_{i\in I_1},\dots,\conv(a_r)_{i\in I_r}\}$ has nonempty intersection. In Section~\ref{sec:lower-bounds} we give new
and improved lower bounds for the numbers~$T(d,r,k)$. We show that $T(d+1,r,k) \ge T(d,r,k) + k-1$, see
Theorem~\ref{thm:reay1}, and $T(d,r,k)\ge r(\frac{k-1}{k}\cdot d+1)$, see Theorem~\ref{thm:reay2}. 

Perles and Sigron~\cite{perles2007} showed that $T(d,r,k) = (r-1)(d+1)+1$ for specific values of~$k$; see Theorem~\ref{thm:reay-known}
for details. However, in those cases $k$ grows linearly with the dimension~$d$, and in fact Perles and Sigron do not
believe that $T(r,d,k) = (r-1)(d+1)+1$ in general. In contrast, Theorem~\ref{thm:reay-bl} establishes a colorful analog
of Reay's conjecture for any dimension~$d$ and a constant~$k$. 

Given a collection $C_1, \dots, C_m \subseteq \R^2$ of convex polygons on a total number of $n$ vertices such that any 
two polygons have nonempty intersection, it is simple to see that the naive extension $m \le n$ of the linear case of the thrackle conjecture 
cannot hold in general. Here we isolate a feature of the pairwise intersection pattern of convex sets that allows us to prove 
an extension of the linear thrackle conjecture: we establish the bound $m \le n$ if the full-dimensional $C_i$ are vertex-disjoint from one another
and there is a \emph{transversal set}~$W$ that contains all vertices and possibly more points such that $|C_i \cap C_j \cap W| = 1$ for all $i \ne j$;
see Theorem~\ref{thm:conv-thrackle}. We further conjecture that it is superfluous to require the full-dimensional $C_i$ to be vertex-disjoint; 
see Conjecture~\ref{conj:EdgesBoundedByVertices}. It is a purely combinatorial statement about pairwise intersection patterns
of arbitrary sets $C_1, \dots, C_m$ (not even necessarily contained in any~$\R^d$), that if there is a transversal of pairwise intersections~$W$, 
that is $|C_i\cap C_j \cap W|=1$ for all $i\ne j$, then $m \le |W|$; see Theorem~\ref{thm:comb-thrackle}.

We present higher-dimensional generalizations of the linear thrackle conjecture in Section~\ref{sec:high-dim} and
conjecture their continuous analogs.

\section*{Acknowledgements}

\noindent
This research was performed during the \emph{Summer Program for Undergraduate Research} 2016 at Cornell University. 
The authors are grateful for the excellent research conditions provided by the program. 
MA is supported by a Clare Boothe Luce scholarship. 
RC is supported by the Princeton University Mathematics Department. 
MP is supported by the Northeastern University Mathematics Department and the Northeastern University Scholars Program. 
DS is supported by a Watson--Brown Foundation scholarship. 
LHT is supported by the Mathematics Department, Science Faculty and Summer Undergraduate Research Programme 2016 of the Chinese University of Hong Kong.

\section{Lower bounds for Reay's relaxed Tverberg conjecture}
\label{sec:lower-bounds}

\noindent
Recall that $T(d,r,k)$ denotes the minimum number~$n$ such that any~$n$ points $a_1,\ldots,a_n$ in~$\R^d$ (not necessarily distinct) admit a partition of the indices $\{1,\ldots,n\}$ into~$r$ pairwise disjoint sets $I_1, \dots, I_r$ such that any size~$k$ subfamily of
$\{\conv(a_i)_{i\in I_1},\dots,\conv(a_r)_{i\in I_r}\}$ has nonempty intersection. By Tverberg's theorem $T(d,r,k) \le (r-1)(d+1)+1$,
and since that theorem is tight we have the equality $T(d,r,r) = (r-1)(d+1)+1$. Reay conjectured that in fact 
this bound is tight even for smaller~$k$, that is, $T(d,r,k) =(r-1)(d+1)+1$ for all $2 \le k \le r$. Reay's
conjecture is known to be true in some cases, and there are a few general lower bounds for the number~$T(d,r,k)$.
We collect these results here:

\begin{thm}
\label{thm:reay-known}
	We have the following lower bounds for $T(d,r,k)$:
\begin{compactenum}[(i)]
	\item Let $2 \le k \le d$, $k \le r$, and $d\ge 2$. Then $T(d,r,k) \ge (r-1)k$, $T(2,r,2) = 3r-2$, and $T(d,d+1,d)\geq (r-1)(d+1)$. Also, for $r\geq3$, we have $T(3,r,2)\geq 3r$; see Reay~\cite{reay1979}.
	\item Let $d + 1 \le 2k - 1$ or $k < r < \frac{d+1}{d+1-k}k$. Then $T(d,r,k) = (r-1)(d+1)+1$. Also $T(3,4,2)=13$ and $T(5,3,2)=13$; see Perles and Sigron~\cite{perles2007}.
	\item We have that $T(d,r,2) \ge r(\lfloor\frac{d}{2} \rfloor+1)$; see Ziegler~\cite{Bacher2011}.
\end{compactenum}
\end{thm}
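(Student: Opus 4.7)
The theorem is a compilation of three families of known lower bounds, so my plan is to recover each by exhibiting, for every inequality $T(d,r,k) \ge n$, a configuration of $n-1$ points in $\R^d$ that admits no partition into $r$ parts with $k$-wise intersecting convex hulls. The unifying geometric tool throughout will be the cyclic polytope $C_d(n)$ together with its $\lfloor d/2 \rfloor$-neighborliness and standard Radon-type dimension counts.

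For Ziegler's bound (iii), I would proceed as follows. Place $n = r(\lfloor d/2 \rfloor + 1) - 1$ points on the moment curve in $\R^d$ so that they form the vertex set of $C_d(n)$. In any partition into $r$ nonempty parts, pigeonhole forces some part $I_j$ to have at most $\lfloor d/2 \rfloor$ elements; by $\lfloor d/2 \rfloor$-neighborliness the set $\conv\{a_i : i \in I_j\}$ is a proper face of $C_d(n)$, so its supporting hyperplane separates it from the convex hulls of the remaining parts. Hence even pairwise intersection fails, yielding $T(d,r,2) \ge n+1$.

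For Reay's bounds (i), my plan is to reuse the moment-curve construction with $(r-1)k - 1$ points and combine neighborliness with the dimension bound that any $k$ parts with a common point in their convex hulls must have total size at least $(k-1)(d+1) + 1$; taking the $k$ smallest parts from $(r-1)k-1$ points violates this count once $k \le d$. The finer equalities $T(2,r,2) = 3r - 2$, $T(3,r,2) \ge 3r$, and $T(d, d+1, d) \ge d(d+1)$ require bespoke low-dimensional configurations (points in convex position in the plane with tailored multiplicities for the first two, and a refined neighborliness argument for the third) rather than the generic scheme.

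For Perles and Sigron's results (ii), the upper bound $T(d,r,k) \le (r-1)(d+1)+1$ is free from Tverberg's theorem because $k$-wise intersection is a relaxation of $r$-wise. The matching lower bound under the hypothesis $d+1 \le 2k-1$ or $k < r < \tfrac{d+1}{d+1-k} k$ is obtained by perturbing a Tverberg-extremal configuration of $(r-1)(d+1)$ points into general position and using the hypothesis to force, via an averaging argument over part sizes, the existence of $k$ parts whose total cardinality is too small for their affine hulls to meet. The explicit equalities $T(3,4,2) = 13$ and $T(5,3,2) = 13$ need case-by-case analysis of specific small configurations.

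The main obstacle, as always in this area, lies in the boundary cases of (i) and (ii): once generic neighborliness and Radon counts no longer cleanly discriminate between partitions (as in the two explicit Perles--Sigron equalities in dimensions $3$ and $5$), one is forced into detailed combinatorial geometry on specific extremal configurations, and this is where I expect the bulk of the technical work to concentrate.
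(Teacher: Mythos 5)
First, a point of reference: the paper does not prove Theorem~\ref{thm:reay-known} at all --- it is a compilation of results quoted from Reay, Perles--Sigron, and Ziegler, and the citations stand in for the proof. So there is no argument of the paper to compare yours against; what I can do is judge whether your reconstructions would actually establish the cited statements.

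Your argument for (iii) is sound and is in fact Ziegler's: with $r(\lfloor d/2\rfloor+1)-1$ points in cyclic position, some part of any $r$-partition has at most $\lfloor d/2\rfloor$ points and hence spans a proper face of the cyclic polytope; all vertices outside that face lie strictly on one side of its supporting hyperplane, so every other part's convex hull misses it. Your dimension count for $T(d,r,k)\ge (r-1)k$ in (i) also goes through, provided you place the points in \emph{strong general position} (a generic condition) rather than leaning on the moment curve and neighborliness, which are not what this step needs: the $k$ smallest of $r$ parts of $(r-1)k-1$ points have total size at most $k^2-k(k+1)/r<k^2$, hence at most $k^2-1\le (k-1)(d+1)$ when $k\le d$, while $k$ disjoint sets in strong general position with a common point in their convex hulls must have total size at least $(k-1)(d+1)+1$. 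This is precisely the technique the paper itself deploys in Theorem~\ref{thm:reay2}.

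The genuine gap is in (ii). The upper bound is indeed free from Tverberg, but your proposed lower-bound mechanism --- an averaging argument over part sizes on a perturbed generic configuration --- cannot reach $(r-1)(d+1)+1$ in the stated range. The codimension count yields only $r(\frac{k-1}{k}d+1)=(r-1)(d+1)+1-(r-k)d/k$ (Theorem~\ref{thm:reay2} and the remark following it), which equals $(r-1)(d+1)+1$ only when $(r-k)d<k$, i.e.\ $r<\frac{d+1}{d}k$. The Perles--Sigron hypothesis $k<r<\frac{d+1}{d+1-k}k$ allows strictly larger $r$ for $k\ge 2$, and the alternative hypothesis $d+1\le 2k-1$ places no upper bound on $r$ at all (e.g.\ $T(3,r,3)=4r-3$ for every $r$, while the averaging count gives only $3r$). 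So for most parameters covered by (ii) your argument falls short by a positive amount, and the actual Perles--Sigron proof is a genuinely different and more intricate construction. Likewise, $T(2,r,2)=3r-2$, $T(3,r,2)\ge 3r$, $T(d,d+1,d)\ge d(d+1)$, $T(3,4,2)=13$ and $T(5,3,2)=13$ are deferred in your sketch to unspecified ``bespoke configurations,'' which is an acknowledgement of the difficulty rather than a proof. If the aim is to prove this theorem rather than cite it, (ii) and the sharper claims in (i) are where essentially all the work lies, and your outline does not yet contain the ideas required there.
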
 

We observe that the general lower bounds for $T(d,r,k)$ that can be found in the (traditional) literature do not even depend on~$d$.
The best lower bounds for pairwise intersections $k=2$ seem to follow from Ziegler's reply to a \texttt{mathoverflow} post by Roland Bacher. 
Ziegler puts points in cyclic position. We extend his reasoning to larger $k > 2$ by putting points in strong general position; see Theorem~\ref{thm:reay2}.

It is simple to see that Tverberg's theorem is tight. For example any sufficiently generic point set will show the
tightness. Alternatively, this can also be verified by an induction on dimension; see de Longueville~\cite{delongueville2001}. 
We will use similar arguments to establish general lower bounds for~$T(d,r,k)$.

\begin{thm}
\label{thm:reay1}
	Let $d\ge2$ and $2\le k\le r$ be integers. Then $T(d+1,r,k) \ge T(d,r,k) + k-1$ and in particular 
	$T(d,r,k)\ge 3r-2+(k-1)(d-2)$.
\end{thm}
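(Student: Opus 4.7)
The plan is to prove the recursive inequality $T(d+1,r,k)\ge T(d,r,k)+k-1$ by a lifting argument in the spirit of de~Longueville's proof of the tightness of Tverberg's theorem~\cite{delongueville2001}; the explicit bound $T(d,r,k)\ge 3r-2+(k-1)(d-2)$ then follows by induction on $d\ge 2$ from the base case $T(2,r,k)\ge T(2,r,2)=3r-2$. The base case combines Theorem~\ref{thm:reay-known}(i) with the elementary monotonicity $T(d,r,k)\ge T(d,r,k')$ for $k\ge k'$, which holds because every $k'$-subfamily extends to a $k$-subfamily whose (smaller) intersection is already nonempty.

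For the recursive inequality, take a bad configuration $a_1,\dots,a_N\in\R^d$ with $N=T(d,r,k)-1$, and embed it into the hyperplane $H=\R^d\times\{0\}\subset\R^{d+1}$. Adjoin $k-1$ copies of a single extra point $q=(0,\dots,0,1)$ at height $1$, producing $T(d,r,k)+k-2$ points in $\R^{d+1}$; it suffices to show that this extended configuration remains bad. Suppose for contradiction that $J_1,\dots,J_r$ is an $r$-partition of it whose $k$-wise convex-hull intersections are all nonempty. Write $A_i$ for the original points in $J_i$ and set $S=\{i:J_i\text{ contains a copy of }q\}$. Because only $k-1$ copies of $q$ were added, $|S|\le k-1<k$, so every $k$-subfamily $F\subseteq\{1,\dots,r\}$ necessarily contains some $j\notin S$.

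The crux is that $\conv(J_j)\subseteq H$ for such $j$, so any common point $p\in\bigcap_{i\in F}\conv(J_i)$ must lie at height $0$. For $i\in F\cap S$ one has $\conv(J_i)=\conv(A_i\cup\{q\})$, whose height-$0$ slice equals $\conv(A_i)$, since any convex combination reaching height $0$ must assign weight $0$ to $q$. Thus $p\in\bigcap_{i\in F}\conv(A_i)$, and so $\{A_1,\dots,A_r\}$ exhibits the $k$-fold intersection property for the $N$ original points in $\R^d$, contradicting the badness of $a_1,\dots,a_N$, provided every $A_i$ is nonempty.

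The main subtlety is the final step of ruling out degenerate parts. If $i\in S$ had $A_i=\emptyset$, then $\conv(J_i)=\{q\}$, and taking any $k$-subfamily containing $i$ together with some $j\notin S$ (available because $r\ge k$ allows us to fill the remaining $k-2$ slots) would force $q\in\conv(J_j)\subseteq H$, a contradiction; similarly no $J_i$ can be empty, because then every $k$-subfamily containing $i$ would have empty intersection. This careful accounting—simultaneously forcing intersection points down to height~$0$ and excluding parts consisting solely of copies of $q$—is the delicate part of the argument; once it is in place the contradiction is immediate, and the inductive bound $T(d,r,k)\ge 3r-2+(k-1)(d-2)$ unfolds directly.
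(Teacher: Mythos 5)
Your proof is correct and follows essentially the same route as the paper: lift a bad $(T(d,r,k)-1)$-point configuration into the hyperplane $\R^d\times\{0\}\subset\R^{d+1}$ and adjoin $k-1$ points strictly above it, so that every $k$-subfamily contains a part lying entirely in the hyperplane and any common intersection point is forced down to height~$0$, where it must lie in the convex hulls of the restricted parts. Your explicit treatment of the degenerate (empty or all-$q$) parts and of the monotonicity $T(2,r,k)\ge T(2,r,2)$ needed for the base case only makes precise details the paper leaves implicit.
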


\begin{proof}
	Let $X \subseteq \R^d$ be a set of $T(d,r,k)-1$ points such that for any partition $X_1, \dots, X_r$ of~$X$
	into $r$ parts there are $k$ sets whose convex hulls avoid a common point of intersection. We will
	explicitly construct a set $Y \subseteq \R^{d+1}$ of $T(d,r,k)+k-2$ points with the same property. To this 
	end place $\R^d$ as the hyperplane~$\R^d \times\{0\}$ into~$\R^{d+1}$. Let $Y$ consist of the points in~$X$ and
	$k-1$ additional points strictly on the positive side of~$\R^d \times\{0\}$.
	
	Suppose $Y$ had a partition into $r$ sets $Y_1, \dots, Y_r$ such that for every $k$ of these sets their
	convex hulls intersect. We claim that $Y_1 \cap X, \dots, Y_r \cap X$ is a partition of~$X$ with the
	same property: for any $k$ of the~$Y_i$, say $Y_1, \dots, Y_k$, at least one $Y_j$ is entirely contained 
	in~$X$ and thus there is a point of intersection among their convex hulls in~$\R^d \times\{0\}$. But this is only possible
	if $\conv(Y_1 \cap X) \cap \dots \cap \conv(Y_k \cap X) \ne \emptyset$. Thus $Y_1 \cap X, \dots, Y_r \cap X$ is a 
	partition of~$X$ such that any $k$ of these sets have intersecting convex hulls --- a contradiction.
	
	The bound $T(d,r,k)\ge 3r-2+(k-1)(d-2)$ now follows inductively starting from $T(2,r,k) = 3r-2$ 
	given by Theorem~\ref{thm:reay-known}.
\end{proof}

\begin{rem}
	Theorem~\ref{thm:reay1} recovers the tightness of Tverberg's theorem for~$k=r$.
\end{rem}

A point set $X \subset \R^d$ is said to be in \emph{strong general position} if for any $r \ge 2$ and any disjoint subsets 
$X_1,\dots, X_r$ of $X$ the codimension of $\bigcap_i \aff(X_i)$ is equal to the sum of the codimensions
of $\aff(X_i)$ or $\bigcap_i \aff(X_i)$ is empty; see Reay~\cite{reay1979-2}, Doignon and Valette~\cite{doignon1977},
and Perles and Sigron~\cite{perles2014}.

\begin{thm}
\label{thm:reay2}
	Let $d\ge1$ and $2\le k\le r$ be integers. Then $T(d,r,k)\ge r(\frac{k-1}{k}\cdot d+1)$.
\end{thm}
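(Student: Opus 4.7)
The plan is to carry out exactly the sketch hinted at in the paragraph after Theorem~\ref{thm:reay-known}: replace the cyclic configuration in Ziegler's argument with a point set in \emph{strong general position}, so that the codimensions of affine hulls of subfamilies can be tracked exactly. It suffices to produce, for the largest integer $n$ with $n < r\bigl(\tfrac{k-1}{k}d + 1\bigr)$, a set $X\subset\R^d$ of $n$ points such that no partition $X = X_1\sqcup\cdots\sqcup X_r$ has every $k$-subfamily of $\{\conv X_1,\dots,\conv X_r\}$ meeting in a common point. This immediately gives $T(d,r,k) \ge n+1 \ge r\bigl(\tfrac{k-1}{k}d+1\bigr)$.

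I would take $X$ to be any $n$ points in strong general position (such configurations form an open dense subset of $(\R^d)^n$ by Reay, Doignon--Valette, and Perles--Sigron). Given an arbitrary partition $X_1,\dots,X_r$, set $m_i := \min(|X_i|, d+1)$, so that $\aff X_i$ has codimension exactly $(d+1) - m_i$ in either regime (small block or full-dimensional block). Let $i_1,\dots,i_k$ index the $k$ blocks with smallest $m_i$. The elementary fact that the mean of the $k$ smallest entries is at most the overall mean yields
$$m_{i_1} + \cdots + m_{i_k} \;\le\; \frac{k}{r}\sum_{i=1}^{r} m_i \;\le\; \frac{kn}{r} \;<\; (k-1)d + k,$$
so this integer sum is at most $(k-1)d + k - 1$, and the codimensions of $\aff X_{i_1},\dots,\aff X_{i_k}$ add up to at least $k(d+1) - ((k-1)d + k - 1) = d+1 > d$. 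The defining property of strong general position then forces $\bigcap_j \aff X_{i_j} = \emptyset$, and therefore $\bigcap_j \conv X_{i_j} = \emptyset$, defeating the partition.

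The only subtle point I anticipate is the truncation $m_i = \min(|X_i|, d+1)$: without it, a block of size exceeding $d+1$ would be assigned a spurious negative ``codimension'', even though its affine hull is the whole of $\R^d$ and contributes $0$ to the codimension sum. Once this adjustment is in place, strong general position still lets the (non-negative) codimensions add along any intersection, and the averaging step already locates $k$ blocks whose total codimension exceeds $d$; the remainder of the argument is routine.
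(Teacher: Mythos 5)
Your proposal is correct and follows essentially the same route as the paper: take points in strong general position, use an averaging argument to locate $k$ blocks whose affine hulls have total codimension exceeding $d$, and conclude from the additivity of codimensions that their convex hulls cannot all meet. The only cosmetic difference is that the paper works with $d_i=\dim\aff(X_i)$ and the trivial bound $|X_i|\ge d_i+1$ (so it never needs the exact codimension formula or the truncation $m_i=\min(|X_i|,d+1)$ you introduce), but the two bookkeeping schemes are equivalent.
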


\begin{proof}
	Any point set $X \subseteq \R^d$ in strong general position admitting a partition into $r$ parts $X_1, \dots, X_r$ such that any $k$ of the sets have intersecting convex hulls 
	has at least $r(\frac{k-1}{k}\cdot d+1)$ points. Denote the dimension of $\aff(X_i)$ by~$d_i$.
	Suppose $\sum_{1\le i\le r}{d_i} < r\cdot\frac{k-1}{k}d$. Then we can find indices $i_1\dots,i_k$ 
	such that $\sum_{1\le j\le k}{d_{i_j}} < (k-1)d$. But we know $\bigcap_{i_1<\dots<i_k} \conv(X_i) \ne \emptyset$ 
	which implies $\sum_{1\le j\le k}{d-d_{i_j}} \le d$, a contradiction. So we have $\sum_{1\le i\le r}{d_i} \ge r\cdot\frac{k-1}{k}d$, 
	and $|X_i|\ge d_i+1$ implies $\sum_{1\le i\le r}{|X_i|} \ge r(\frac{k-1}{k}\cdot d+1)$, as desired.
\end{proof}

\begin{rem}
	Note that $r=k$ here also recovers the tightness of Tverberg's theorem.  This bound can be 
	rewritten $(r-1)(d+1)+1-(r-k)\cdot d/k$, and for $k=r-1$ and $k\geq d+1$ the bound recovers $T(d,r,k)=(r-1)(d+1)+1$, which follows alternately from Helly's Theorem. This bound is better than the bound in Theorem~\ref{thm:reay1} for $d$ or $k$ sufficiently large.
\end{rem}

\section{Proof of a colored version of Reay's conjecture}

\noindent
Reay's conjecture is known to be true only for $k$-fold intersections, where $k$ grows linearly with~$d$. 
Here we present a variant of Reay's conjecture that turns out to be true for $k > \lceil \frac{r}{2} \rceil$ in any 
dimension~$d$. We view this as further evidence that the conjecture is true. Our variant is a $k$-fold
analog of the following conjecture which is open in general:

\begin{conj}[B\'ar\'any--Larman conjecture]
\label{conj:bl}
	Given sets $C_0, \dots, C_d \subseteq \R^d$ of cardinality~$r$, there are pairwise disjoint sets
	$X_1, \dots, X_r \subseteq \dot{\bigcup} C_i$ such that $|X_i \cap C_j| \le 1$ for every $i$ and~$j$ and
	$\conv(X_1) \cap \dots \cap \conv(X_r) \ne \emptyset$. 
\end{conj}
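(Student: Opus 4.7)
The plan is to follow the configuration space/test map (CS/TM) scheme that has driven most progress on colored Tverberg statements since Bárány, Shlosman, and Szűcs. First, I would encode a colorful partition $X_1, \dots, X_r$ as a point of a suitable deleted join: let $K = (C_0 * C_1 * \cdots * C_d)^{*r}_\Delta$ be the subcomplex of the $r$-fold join whose faces consist of $r$ pairwise disjoint subsets, each meeting every color class in at most one point. The symmetric group $S_r$ acts freely on $K$ by permuting the $r$ blocks, and $K$ is highly connected, being a chessboard-type complex of the appropriate dimension to serve as the source of an equivariant map.

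Next, I would construct a test map $f : K \to (\R^d)^{r-1}$ by $f(X_1,\dots,X_r) = (b_1 - b_r,\, b_2 - b_r,\, \dots,\, b_{r-1} - b_r)$, where $b_i$ is the barycenter in $\R^d$ of the points selected by the $i$th block. A zero of $f$ is exactly a colorful Tverberg partition, so Conjecture~\ref{conj:bl} reduces to showing $f$ must have a zero. Equivalently, after rescaling, one must verify that no $S_r$-equivariant map $K \to S^{(r-1)d - 1}$ exists, where the target carries the standard permutation representation of $S_r$ modulo its diagonal.

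For the topological nonexistence I would first handle $r$ prime. Restrict the action to the cyclic subgroup $\mathbb{Z}/r \subset S_r$, apply the Dold--Volovikov equivariant Borsuk--Ulam theorem with $\mathbb{F}_r$ coefficients, and compute the mod-$r$ index of $K$ to detect the obstruction; this is essentially the route taken by Blagojević--Matschke--Ziegler in their optimal colored Tverberg theorem. For composite $r$ I would attempt either a reduction via the constraint method to a prime instance of full Tverberg, or a direct passage to the smallest prime divisor $p$ of $r$ together with a finer analysis of the residual $S_r/(\mathbb{Z}/p)$-action on $K$.

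The main obstacle is precisely the composite case. For composite $r$ the relevant $S_r$-equivariant index of $K$ vanishes, so the CS/TM obstruction collapses and one cannot conclude by purely topological means---this is exactly why Conjecture~\ref{conj:bl} remains open at this level of generality and why a uniform topological proof appears to be beyond the scope of the template above. I would therefore realistically expect the plan to succeed only for prime $r$ (and for small $r$ such as $2,3$ by the direct Bárány--Larman arguments), and would have to look for substantively new input---perhaps a colored analog of the constraint method of Blagojević--Frick--Ziegler, or a Sarkaria-type polynomial/tensor argument---to bridge the prime case to arbitrary~$r$.
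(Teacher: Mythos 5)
This statement is an open conjecture; the paper does not prove it and explicitly records it as open in general, citing only the partial results of B\'ar\'any--Larman (the planar case), Lov\'asz ($r=2$), and Blagojevi\'c--Matschke--Ziegler ($r+1$ prime). So there is no proof in the paper to compare yours against, and your proposal --- which candidly concedes that it cannot close the composite case --- is a research outline rather than a proof. That self-assessment is accurate: the conjecture remains open precisely because the configuration-space/test-map scheme breaks down beyond the known special cases.

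Two specific inaccuracies in your outline are worth flagging. First, the deleted join $K$ you describe is a join of $d+1$ chessboard complexes $\Delta_{r,r}$, and this complex is \emph{not} sufficiently highly connected for the direct equivariant argument: $\Delta_{r,r}$ is only about $(\lfloor(2r+1)/3\rfloor-2)$-connected, which falls short of the $((r-1)d-1)$-connectivity needed to rule out an equivariant map to the sphere once $d\ge 3$. This is exactly why \v{Z}ivaljevi\'c--Vre\'cica had to enlarge the color classes to size $2r-1$, and why the direct index computation you propose for prime $r$ does not go through. Second, the case actually settled by Blagojevi\'c--Matschke--Ziegler is $r+1$ prime, not $r$ prime; their proof does not run the test map on $K$ directly but adds an auxiliary point, applies their optimal colored Tverberg theorem with $r+1$ parts, and then discards a part. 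So even the ``prime case'' of your plan, as stated, is not known to work; the honest status is that the conjecture is proved only for $d\le 2$, for $r=2$, and for $r+1$ prime.
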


B\'ar\'any and Larman~\cite{barany1992} proved that this conjecture holds in the plane. Lov\'asz observed 
that the case $r=2$ is an immediate consequence of the Borsuk--Ulam theorem; this was remarked on in~\cite{barany1992}. 
More generally, the truth of this conjecture was established for $r+1$ a prime by Blagojevi\'c, Matschke, and Ziegler~\cite{blagojevic2015}. 
Here we show that in general one cannot even delete a single point and still find sets $X_1, \dots, X_r$ as in 
Conjecture~\ref{conj:bl} such that the convex hulls of any $k > \lceil \frac{r}{2} \rceil$ of them intersect.

\begin{thm}
\label{thm:reay-bl}
	Let $d \ge 1$, $r \ge 2$ and $k > \lceil \frac{r}{2} \rceil$ be integers. There are point sets $C_1, \dots, C_d 
	\subseteq \R^d$ of cardinality~$r$, and $C_0$ of cardinality~$r-1$, such that for any $r$ pairwise 
	disjoint sets $X_1, \dots, X_r \subseteq \dot{\bigcup} C_i$ with $|X_i \cap C_j| \le 1$ for every $i$ and~$j$, the convex hulls of some $k$ of them have 
	empty intersection.
\end{thm}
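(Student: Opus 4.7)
The plan is to construct an explicit colored configuration and then, via a combination of strong general position and the combinatorics of $k > \lceil r/2\rceil$, to rule out every rainbow partition all of whose $k$-subfamilies have nonempty common intersection.

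For the construction, I take a fixed point $p\in\R^d$ (say the origin) and let $C_0$ consist of $r-1$ copies of $p$. For each $j\in\{1,\ldots,d\}$, I take $C_j$ to be $r$ points lying in an open half-space whose bounding hyperplane passes through $p$, chosen so that the whole set of $r(d+1)-1$ points is in strong general position in the sense of Reay, Doignon--Valette, and Perles--Sigron. In particular, the convex hull of any rainbow selection of one point from each $C_1,\ldots,C_d$ lies strictly in that half-space and so avoids $p$, and no proper affine flat spanned by fewer than $d+1$ of these points passes through $p$.

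Given any rainbow partition $X_1,\ldots,X_r$ with $|X_i\cap C_j|\le 1$, pigeonhole on $|C_0|=r-1$ produces an index $i_0$ with $X_{i_0}\cap C_0=\emptyset$, so $|X_{i_0}|\le d$; by strong general position $\aff(X_{i_0})$ is a proper affine subspace $H\subset\R^d$, and by the half-space placement $p\notin H$. Every other $X_i$ contains $p$. Using all available points (the hardest case for failure of intersection), one checks that $\sum_i\mathrm{codim}\,\aff(X_i)=1$, so $X_{i_0}$ is the unique dimension-deficient set and all the other $\conv(X_i)$ are full-dimensional simplices having $p$ as a vertex.

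Now suppose for contradiction that every $k$-subfamily of $\{\conv(X_i)\}$ meets in a common point. The $k$-subsets not containing $i_0$ satisfy this trivially via $p$. For any $k$-subset $S\ni i_0$, the common intersection is contained in $\conv(X_{i_0})\subset H$ and in particular avoids $p$. The hypothesis $k>\lceil r/2\rceil$ is equivalent to every two $(k-1)$-subsets of the $(r-1)$-element set $[r]\setminus\{i_0\}$ meeting. I would combine this intersecting-family condition with the strong general position dimension identity $\mathrm{codim}(\bigcap_{i\in T}\aff(X_i))=\sum_{i\in T}\mathrm{codim}(\aff(X_i))$, applied inside the lower-dimensional flat $H$ to the slices $\conv(X_i)\cap H$, in order to force the intersection of all $r-1$ full simplices $\conv(X_i)$, $i\ne i_0$, to meet $\conv(X_{i_0})$. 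But that total intersection contains $p\notin H$ and, by strong general position, is confined to $\{p\}$, so it cannot meet $\conv(X_{i_0})\subset H\setminus\{p\}$, a contradiction.

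The main obstacle is the gluing step: turning the pairwise intersection property of the $(k-1)$-subsets into a genuine common intersection inside $H$. Pairwise Helly does not suffice in arbitrary dimension, and one must use the specific dimension relations imposed by strong general position together with the colorful constraint to reduce the effective Helly dimension. This is where the precise form $k>\lceil r/2\rceil$ (and not merely $k\ge 2$) is essential, and I expect it to be the technical heart of the argument; a naive global dimension average, as in Theorem~\ref{thm:reay2}, is too slack to deliver the contradiction without exploiting the asymmetry caused by the deficient class $C_0$.
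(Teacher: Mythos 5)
Your proposal does not follow the paper's route, and --- as you yourself flag --- it is missing its central step. For reference, the paper's proof is a short induction on dimension: $C_0$ is any $(r-1)$-point set in $\R^0$, so in the base case some $X_i$ must be empty; and to pass from $\R^d$ to $\R^{d+1}$ one places the already-constructed configuration in the hyperplane $\R^d\times\{0\}$ and adds $C_{d+1}$ with $\lceil \frac{r}{2}\rceil$ points strictly above and $\lfloor \frac{r}{2}\rfloor$ strictly below. Since each $X_i$ meets $C_{d+1}$ in at most one point, among any $k>\lceil \frac{r}{2}\rceil$ of the $X_i$ at least one lies in the closed upper halfspace and at least one in the closed lower halfspace, so any common point of those $k$ convex hulls is forced into $\R^d\times\{0\}$, where the induction hypothesis applies. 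That is the entire use of the hypothesis on $k$, and no general-position or Helly-type analysis is needed.

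By contrast, your argument stalls exactly where you say it does: the assertion that the intersecting-family property of the $(k-1)$-subsets of $[r]\setminus\{i_0\}$ together with strong general position ``forces'' the $r-1$ simplices through $p$ to meet $\conv(X_{i_0})$ is the entire content of the theorem in your setup, and no mechanism is supplied; $(k-1)$-wise nonempty intersection of convex sets in a flat of dimension $d-1$ does not yield a common point without a Helly-type hypothesis you have not established. Several supporting claims are also false or unjustified as stated. First, taking $C_0$ to be $r-1$ copies of $p$ destroys strong general position (every $\aff(X_i)$ with $X_i\ni p$ passes through $p$, so the codimension identity you invoke fails for these subsets) and arguably violates the requirement that $C_0$ have cardinality $r-1$. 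Second, the sets $X_1,\dots,X_r$ need not exhaust $\dot{\bigcup} C_i$, so it is not true that every $X_i$ with $i\ne i_0$ contains $p$, nor that $\sum_i \mathrm{codim}\,\aff(X_i)=1$; in particular two or more classes may avoid $C_0$ entirely. Third, strong general position constrains affine hulls, not convex hulls: once the $X_i$, $i\ne i_0$, are full-dimensional their affine hulls are all of $\R^d$ and the condition is vacuous, so nothing confines $\bigcap_{i\ne i_0}\conv(X_i)$ to $\{p\}$. The gap is therefore genuine, and I would recommend replacing this route by the halfspace-splitting induction, which is complete and elementary.
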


\begin{proof}
	We construct the point set $\dot{\bigcup} C_i$ by induction over dimension. The theorem holds for any
	set $C_0 \subseteq \R^0$ of cardinality~$r-1$, since any partition of $C_0$ into $r$ parts must 
	include the empty set. Having inductively constructed $C_0, \dots, C_d \subseteq \R^d$ as in the
	statement of the theorem, we place $\R^d$ as the hyperplane~$\R^d\times\{0\}$ in~$\R^{d+1}$ and add point set~$C_{d+1}$:
	place $\lceil \frac{r}{2} \rceil$ points of $C_{d+1}$ above $\R^d \times\{0\}$ and $\lfloor \frac{r}{2} \rfloor$ 
	points below. For any $r$ pairwise disjoint sets in $\dot{\bigcup} C_i$ any intersection among the convex 
	hulls of $k$ of them must already occur in~$\R^d \times \{0\}$ since no convex hull can contain
	two points of~$C_{d+1}$, and this finishes the induction.
\end{proof}

In particular, for $k = r$ this shows that the B\'ar\'any--Larman conjecture is tight in the sense that not even
a single point may be deleted in general.

\section{Convex generalizations of Conway's thrackle conjecture}

\noindent
Recall that a \emph{thrackle} is a graph that can be drawn in the plane such that any pair of edges intersects
precisely once, either at a common vertex or at a point of transverse intersection. Conway conjectured that
in any thrackle the number of edges does not exceed the number of vertices. This is simple to prove if all
edges are straight line segments, see Erd\H os~\cite{erdos1946} for a short proof of this \emph{linear thrackle conjecture}, 
but has remained open in general. Lov\'asz, Pach, and Szegedy~\cite{lovasz1997} proved that any 
thrackle on $n$ vertices has at most $2n-3$ edges. This bound was improved to roughly $1.428n$ by 
Fulek and Pach~\cite{fulek2010}.

Here we are interested in convex-geometric generalizations of the linear thrackle conjecture, where we replace
straight edges by more general convex sets. The naive conjecture that if $C_1, \dots, C_m$ are convex polygons
in the plane on a total number of $n$ vertices with pairwise nonempty intersections, then $m \le n$ is wrong:
consider the vertices of a regular $7$-gon and the twenty-one triangles containing precisely one edge of the 
$7$-gon.

If, however, the pairwise intersections admit a transversal set~$W$ as explained below, then we conjecture that
the number of convex sets is bounded by the total number of vertices:

\begin{conj}
\label{conj:EdgesBoundedByVertices}
	Let $W\subseteq \R^2$ be a finite set of points, $V\subseteq W$ a set of $n$ points, $C_1,\dots, C_m$ 
	distinct convex hulls of subsets of~$V$ and $|C_i\cap C_j \cap W|=1$ for all $i\ne j$. Then $m \le n$.
\end{conj}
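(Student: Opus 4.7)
The plan is to attack Conjecture~\ref{conj:EdgesBoundedByVertices} by induction on $n$, eliminating reducible configurations and then invoking a Fisher-type rank argument on the combinatorial residue. Throughout, denote by $p_{ij}$ the unique point of $C_i\cap C_j\cap W$ guaranteed by the hypothesis.

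First I would perform a preprocessing reduction. The transversal hypothesis together with $V\subseteq W$ forces $|C_i\cap C_j\cap V|\le 1$ for all $i\ne j$, since two $V$-points in $C_i\cap C_j$ would already give two points of $C_i\cap C_j\cap W$. Any $v\in V$ that is not a vertex of any $C_i$ can simply be removed, the family and transversal being unchanged and $n$ dropping by one. More generally, if some $v\in V$ is \emph{private} to a single $C_i$ -- that is, $v$ is a vertex of $C_i$ but of no other $C_j$ -- I delete both $v$ from $V$ and $C_i$ from the family. Because $v$ is a vertex of no remaining $C_j$, any occurrence of $v$ in the subset defining $C_j$ is non-extreme and can be dropped, so each $C_j$ is still a convex hull of a subset of $V\setminus\{v\}$, and the transversal condition in $W$ is preserved; induction then gives $m-1\le n-1$. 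Iterating, we may assume every $v\in V$ is a vertex of at least two of the $C_i$.

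Next, for the purely combinatorial sub-case $W=V$, a direct Fisher-type argument suffices. Let $M$ be the $m\times n$ matrix with $M_{iv}=\mathbf{1}[v\in C_i]$. Then $MM^T=D+J$, where $J$ is the all-ones matrix and $D=\mathrm{diag}(|C_i\cap V|-1)$ is a nonnegative diagonal matrix with at most one zero entry (two distinct $C_i$ that are both singletons would be disjoint, contradicting $|C_i\cap C_j\cap W|=1$). A short kernel computation shows $D+J$ is invertible, so $M$ has full row rank $m$ and $m\le n$ follows.

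The main obstacle -- and what I expect makes the conjecture genuinely hard -- is the case $W\supsetneq V$. Here some $p_{ij}$ lie in $W\setminus V$, so the off-diagonal entries $(MM^T)_{ij}=|C_i\cap C_j\cap V|$ can be $0$ as well as $1$, and the argument above fails. I would try an outer induction on $|W\setminus V|$: for $w\in W\setminus V$, if $w$ lies in at most one $C_i$ it can simply be deleted from $W$; if instead $w=p_{ij}$ for several pairs, one must either exploit the planar convex geometry near $w$ to expose a hidden private vertex in $V$ (feeding back into the first reduction), or perturb the $C_i$ so as to move the intersection at $w$ onto a point already in $V$. Guaranteeing such a reduction while simultaneously preserving the transversal condition for \emph{every} pair is the delicate step where I expect the argument to stall, and is likely the reason the conjecture remains open.
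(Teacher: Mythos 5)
You are attempting to prove Conjecture~\ref{conj:EdgesBoundedByVertices}, which the paper explicitly leaves open: it contains no proof of this statement, only two partial results (Theorems~\ref{thm:conv-thrackle} and~\ref{thm:comb-thrackle}). Your proposal likewise does not close the conjecture and honestly says so, so judged as a proof it has a genuine gap, namely the entire case $W\supsetneq V$ after your reductions. That said, the parts you do carry out are correct. The reductions (deleting non-vertices of $V$, deleting a private vertex together with its unique set, deleting points of $W\setminus V$ lying in at most one $C_i$) are all sound and preserve the hypotheses. Your Fisher-type argument for $W=V$ is also correct: with $M_{iv}=\mathbf{1}[v\in C_i]$ one gets $MM^T=D+J$, $D$ has at most one zero diagonal entry because a set with $|C_i\cap V|=1$ must be a singleton of $V$ and two distinct singletons would violate the transversal condition, and the kernel computation shows $D+J$ is positive definite, giving $m=\operatorname{rank}(MM^T)\le n$. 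This recovers exactly the content of the paper's Theorem~\ref{thm:comb-thrackle} (which in fact gives $m\le |W|$ for arbitrary sets admitting a transversal); the paper proves it by invoking the de~Bruijn--Erd\H{o}s theorem on decompositions of $K_m$ into proper complete subgraphs, which is classically established by the very same rank argument you use, so the two routes are essentially equivalent.

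What your outline does not touch is the other special case the paper actually proves, Theorem~\ref{thm:conv-thrackle}, where the two-dimensional $C_i$ are pairwise vertex-disjoint. There the argument is genuinely geometric: each vertex selects at most one incident convex set (a ray or a wedge) by an angular rule, and a case analysis using planar convexity and the transversal condition shows that every $C_i$ is selected by one of its vertices, yielding a surjection from a subset of the vertices onto the sets and hence $m\le n$. This is precisely the kind of planar input your final paragraph anticipates will be needed to handle intersection points in $W\setminus V$, but neither you nor the paper supplies an argument that combines it with the combinatorial bound to settle the conjecture in general; the counterexample in $\R^3$ given in the paper shows any such argument must use the planarity in an essential way.
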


A system of convex sets as in Conjecture~\ref{conj:EdgesBoundedByVertices} is a \emph{thrackle of convex sets}. 
If all the $C_i$ have two elements, that is, they are edges, then this reduces to the linear case of Conway's 
thrackle conjecture. Here the transversal set $W$ consists of all vertices and intersection points.
Theorem~\ref{thm:conv-thrackle} is special case of this conjecture, which is properly
stronger than the linear case of the thrackle conjecture. 

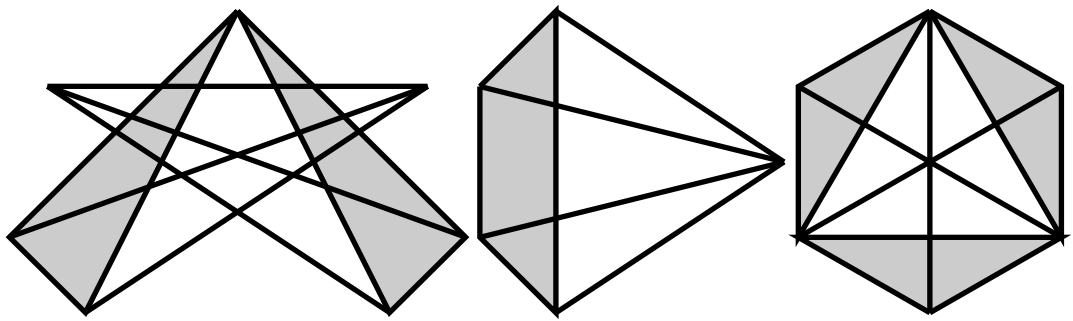
\begin{figure}[h!]
\begin{tikzpicture}
\filldraw[fill=black!20!white, draw=black, line width=2](0, 0)--(-3, -3)--(-2, -4)--(0, 0);
\filldraw[fill=black!20!white, draw=black, line width=2](0, 0)--(3, -3)--(2, -4)--(0, 0);
\path[line width=2] 
	(-2.5, -1) edge (2, -4)
    (-2.5, -1) edge (3, -3)
    (2.5, -1) edge (-2, -4)
  	(2.5, -1) edge[line width=2] (-2.5, -1)
    (2.5, -1) edge (-3, -3);

\end{tikzpicture}
\begin{tikzpicture}
\filldraw[fill=black!20!white, draw=black, line width=2](0, 1)--(0, -1)--(1, -2)--(1, 2)--(0, 1);

\path[line width=2] 
	(0, 1) edge (4, 0)
	(0, -1) edge (4, 0)
    (1, 2) edge (4, 0)
    (1, -2) edge (4, 0);

\end{tikzpicture}
\begin{tikzpicture}
\filldraw[fill=black!20!white, draw=black, line width=2](0, 2)--(1.73, 1)--(1.73, -1)--(0,2);
\filldraw[fill=black!20!white, draw=black, line width=2](0, 2)--(-1.73, 1)--(-1.73, -1)--(0,2);
\filldraw[fill=black!20!white, draw=black, line width=2](0, -2)--(1.73, -1)--(-1.73, -1)--(0,-2);
\path[line width=2] 
	(0, 2) edge (0, -2)
    (1.73, 1) edge (-1.73, -1)
    (1.73, -1) edge (-1.73, 1);

\end{tikzpicture}
\caption{Other examples of tight thrackles for Conjecture~\ref{conj:EdgesBoundedByVertices}}
\end{figure}

\begin{ex}
\label{ex:proj}
Tight examples for Conjecture~\ref{conj:EdgesBoundedByVertices}
can be obtained from finite projective planes; see Chapter 19 of van Lint and Wilson~\cite{vanlint2001} for an introduction
to combinatorial designs. A \emph{projective plane} is an incidence relation among an abstract set
of points and an abstract set of lines such that any two distinct points are incident to exactly one line, any two distinct
lines are incident to exactly one point, and there are four points such that no line is incident with three of them. In a
finite projective plane the number of points is equal to the number of lines. Finite projective planes on $q^2+q+1$ points,
with the \emph{order} $q$ a power of a prime, are simple to construct, while it is unknown whether projective planes
of order that is not a prime power exist. Given a projective plane with $n$ points and $n$ lines, consider a convex 
$n$-gon in the plane with vertices in bijection with points, and let $C_1, \dots, C_n$ be those convex sets that 
are determined by lines of the projective plane. Then any two distinct sets intersect at a common vertex and no
other vertices. Thus the set of vertices is a transversal set in the sense of Conjecture~\ref{conj:EdgesBoundedByVertices}
and the number of convex sets is equal to the total number of vertices.
\end{ex}

\begin{thm} 
\label{thm:conv-thrackle}
	Conjecture~\ref{conj:EdgesBoundedByVertices} holds in the case that the vertex sets of $C_i, C_j$ are 
	disjoint whenever $C_i, C_j$ are both 2-dimensional.
\end{thm}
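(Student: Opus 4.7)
The plan is to construct an injective map $\phi\colon \{C_1, \ldots, C_m\} \to V$ satisfying $\phi(C_i) \in V(C_i)$ for every $i$, which immediately yields $m \leq |V| = n$. By Hall's marriage theorem applied to the bipartite incidence graph between $\{C_1, \ldots, C_m\}$ and $V$ (with $C_i$ adjacent to $v$ precisely when $v \in V(C_i)$), such an injection exists if and only if for every subset $S \subseteq \{C_1, \ldots, C_m\}$ one has $\bigl|\bigcup_{C \in S} V(C)\bigr| \geq |S|$.

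Two preliminary observations sharpen the structure. First, any two distinct $0$-dimensional members $\{p\}, \{p'\}$ have empty intersection, violating the hypothesis $|C_i \cap C_j \cap W| = 1$, so at most one $C_i$ is $0$-dimensional; this case can be handled at the end. Second, any $1$-dimensional $C_j$ has at most one endpoint in $V(C_i)$ for any $2$-dimensional $C_i$: if both endpoints lay in $V(C_i)$, then $C_j \subseteq C_i$ and the two endpoints contribute $|C_i \cap C_j \cap W| \geq 2$, contradicting the hypothesis.

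To verify Hall's condition on $S = S_\mathcal{P} \cup S_\mathcal{E} \cup S_\mathcal{V}$ decomposed by dimension, the disjointness hypothesis on $2$-dimensional members combined with each having at least three vertices gives $\bigl|\bigcup_{C \in S_\mathcal{P}} V(C)\bigr| = \sum_{C \in S_\mathcal{P}} |V(C)| \geq 3|S_\mathcal{P}|$, a disjoint union. Erd\H{o}s's resolution of the linear case of Conway's thrackle conjecture, applied to the pairwise intersecting segments of $S_\mathcal{E}$, yields $\bigl|\bigcup_{C \in S_\mathcal{E}} V(C)\bigr| \geq |S_\mathcal{E}|$.

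The principal obstacle, and the step I expect to demand the most care, is combining these two estimates into $\bigl|\bigcup_{C \in S} V(C)\bigr| \geq |S|$ in the presence of possible overlaps between polygon vertex sets and edge endpoints. The slack of at least $2|S_\mathcal{P}|$ vertices in $\bigcup_{C \in S_\mathcal{P}} V(C)$ beyond the matching demand of $S_\mathcal{P}$ absorbs most such overlap, but the second preliminary observation above is essential to control the sharing precisely. An alternative route is induction on $n = |V|$: delete a vertex of degree zero or one in the incidence graph (along with the unique $C_i$ containing such a vertex, in the degree-one case) to reduce to a smaller instance, and handle the remaining situation---in which every vertex lies in at least two of the $C_i$---by a simultaneous use of the disjointness and linear-thrackle bounds.
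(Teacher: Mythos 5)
Your framing---build an injective choice of a representative vertex for each $C_i$---is the same goal the paper pursues (the paper constructs a selection rule at each vertex and shows it surjects onto the sets, which is the same thing as a system of distinct representatives), and your two preliminary observations are correct and useful. But the proposal has a genuine gap exactly where all the difficulty lives. First, note that routing through Hall's condition buys nothing by itself: a subfamily $S$ of a thrackle of convex sets is again a thrackle of convex sets with the same transversal $W$, and the Hall inequality $\bigl|\bigcup_{C\in S}V(C)\bigr|\ge |S|$ is precisely the conclusion of the theorem applied to $S$. So verifying Hall's condition for all $S$ is the theorem, not a reduction of it.

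Second, the counting you propose cannot close that gap. The ``slack'' argument needs $|S_{\mathcal E}|\le 2|S_{\mathcal P}|$ whenever all edge endpoints lie among polygon vertices, and nothing in your preliminary observations forbids a configuration with, say, two vertex-disjoint triangles ($6$ vertices) and $7$ or more pairwise-intersecting edges whose endpoints all lie on those triangles (your second observation only limits each edge to one endpoint per polygon, so edges between two different polygons are unconstrained). Ruling such configurations out is not a matter of adding the two separate bounds; it requires analyzing how edges incident to a polygon's vertices interact with that polygon's wedge. This is the content of the paper's proof: a vertex selects either an extremal ray (Erd\H{o}s-style) or its unique incident wedge, and the claim that every polygon is selected by some vertex is proved by a two-case geometric argument culminating in a cyclic angle-sum contradiction ($0<\sum_i(\gamma_i-\pi)=-2\pi$) for a polygon none of whose vertices selects it. Your alternative route (induction deleting a vertex of degree at most one) likewise defers the entire problem to the case where every vertex lies in at least two sets, for which you give no argument. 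Until the combination step is supplied by an actual geometric argument of this kind, the proof is incomplete.
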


\begin{proof}
Each vertex is incident to at most one $2$-dimensional set. Therefore, the neighborhood of a given vertex
consists of some rays along with at most one wedge, which represents a $2$-dimensional convex set. 
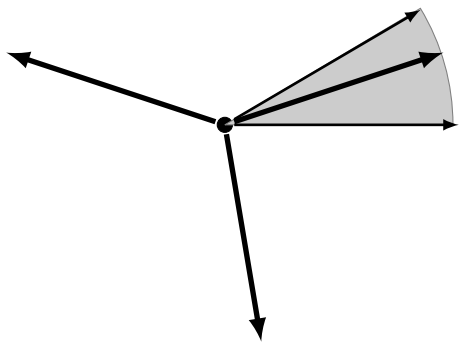
\begin{figure}[h!]
\centering
\begin{tikzpicture}[every path/.style={>=latex}]
\filldraw[fill=black] (0, 0) circle (1mm);
\filldraw[fill=black!20!white, draw=black!50!white]
    (0,0) -- (30mm,0mm) arc (0:31:30mm) -- (0,0);
\node (A) at (0, 0){};
\node (B) at (3, 1){};
\node (C) at (3.2, 0){};
\node (F) at (0.5, -3){};
\node (D) at (2.7, 1.6){};
\node (E) at (-3, 1){};
\path[->, line width=2]
	(A) edge (B)
    (A) edge (F)
    (A) edge (E);
\path[->, line width=1]
    (A) edge (C)
    (A) edge (D);
\end{tikzpicture}
\caption{Example configuration about some vertex}
\end{figure}
We describe a surjection from a subset of the vertices onto the set of convex sets. Each vertex selects at most one incident set~$C_i$:

\begin{compactitem}
	\item Case 1: If there are no wedges around~$v$, then if the measure of the clockwise angle from some ray 
		to every other ray around~$v$ is in $(0, \pi)$, that ray is selected. Otherwise, no ray is selected.
	\item Case 2: If the wedge around~$v$ contains some ray internally, 
		then the wedge is removed from consideration and a ray is chosen as in Case~1.
	\item Case 3: If the wedge around~$v$ contains no ray internally, then the wedge is replaced with its 
		counterclockwisemost representative ray, and then a ray is selected as in Case~1.
\end{compactitem}
\begin{figure}[h!]
\centering
\scalebox{0.5}{
\begin{tikzpicture}[every path/.style={>=latex}]
\filldraw[fill=black] (0, 0) circle (1mm);
\node (A) at (0, 0){};
\node (B) at (3, 1){};
\node (C) at (3.2, 0){};
\node (F) at (0.5, 3){};
\node (D) at (2.7, 1.6){};
\node (E) at (-3, 1){};
\path[->, line width=2]
	(A) edge (B)
    (A) edge (F)
    (A) edge[color=red, line width=3] (E);

\end{tikzpicture}
\begin{tikzpicture}[every path/.style={>=latex}]
\filldraw[fill=black] (0, 0) circle (1mm);
\filldraw[fill=black!20!white, draw=black!50!white]
    (0,0) -- (30mm,0mm) arc (0:31:30mm) -- (0,0);
\node (A) at (0, 0){};
\node (B) at (3, -1){};
\node (C) at (3.2, 0){};
\node (F) at (0.5, -3){};
\node (D) at (2.7, 1.6){};
\node (E) at (3, 1){};
\path[->, line width=2]
	(A) edge (B)
    (A) edge (F)
    (A) edge[color=red, line width=3] (E);
\path[->, line width=1]
    (A) edge (C)
    (A) edge (D);
\end{tikzpicture}
\begin{tikzpicture}[every path/.style={>=latex}]
\filldraw[fill=black] (0, 0) circle (1mm);
\filldraw[fill=red!20!white, draw=red!50!white]
    (0,0) -- (30mm,0mm) arc (0:31:30mm) -- (0,0);
\node (A) at (0, 0){};
\node (B) at (3, -1){};
\node (C) at (3.2, 0){};
\node (F) at (0.5, -3){};
\node (D) at (2.7, 1.6){};
\node (E) at (-0.5, -3){};
\path[->, line width=2]
	(A) edge (B)
    (A) edge (F)
    (A) edge (E);
\path[->, line width=1]
    (A) edge[color=red, line width=1] (C)
    (A) edge[color=red, line width=3] (D);
\end{tikzpicture}
\begin{tikzpicture}[every path/.style={>=latex}]
\filldraw[fill=black] (0, 0) circle (1mm);

\node (A) at (0, 0){};
\node (B) at (3, 1){};
\node (C) at (3.2, 0){};
\node (F) at (0.5, -3){};
\node (D) at (2.7, 1.6){};
\node (E) at (-3, 1){};
\path[->, line width=2]
	(A) edge (B)
    (A) edge (F)
    (A) edge (E);
\end{tikzpicture}
\begin{tikzpicture}[every path/.style={>=latex}]
\filldraw[fill=black] (0, 0) circle (1mm);
\filldraw[fill=black!20!white, draw=black!50!white]
    (0,0) -- (30mm,0mm) arc (0:31:30mm) -- (0,0);
\node (A) at (0, 0){};
\node (B) at (3, 1){};
\node (C) at (3.2, 0){};
\node (F) at (0.5, -3){};
\node (D) at (2.7, 1.6){};
\node (E) at (-3, 1){};
\path[->, line width=2]
    (A) edge (F)
    (A) edge (E);
\path[->, line width=1]
    (A) edge (C)
    (A) edge (D);
\end{tikzpicture}}

\caption{Examples of the described selection about a vertex; in the last two cases, no ray or wedge is chosen.}
\end{figure}
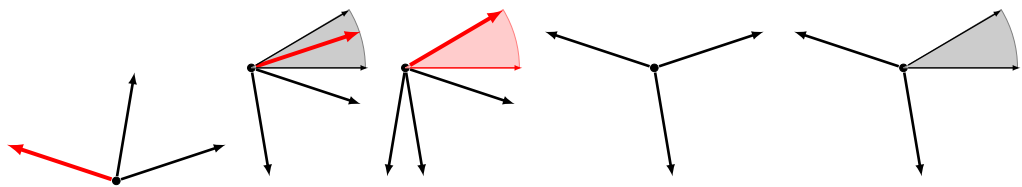

Every convex set is chosen by one of its vertices. First, we observe that this holds for all edges. 
Indeed, suppose this is not the case for some edge~$\overline{v_i v_j}$. Since ray $\overrightarrow{v_i v_j}$ 
was not chosen, some convex set containing~$v_i$ as a vertex lies entirely in union of the open 
half-plane~$H^+$ with the extension of $\overrightarrow{v_j v_i}$ past~$v_i$ (see Figure ~\ref{fig:EdgeChoose}). Similarly, 
some convex set containing~$v_j$ as a vertex must lie entirely in the intersection of the open 
half-plane~$H^-$ with the extension of $\overrightarrow{v_i v_j}$ past~$v_j$. However, 
these two sets are disjoint, so the corresponding convex sets would also be disjoint. 
This contradicts the condition $|C_i\cap C_j\cap W|=1$ for all $i, j$, and is therefore impossible. 
It follows that every two-vertex convex set is chosen by one of its vertices.

\begin{figure}[h!]
\centering
\minipage{0.5\textwidth}
\scalebox{0.6}{
\begin{tikzpicture}[every path/.style={>=latex}]
\tikzstyle{every node}=[font=\huge]
\fill[fill=blue!20!white](-7, 0)rectangle (7, 4);
\fill[fill=red!20!white](-7, 0)rectangle (7, -4);
\filldraw[fill=black] (-3, 0) circle (1mm);
\filldraw[fill=black] (3, 0) circle (1mm);
\node (A) at (-3, 0){};
\node (B) at (0, 2){$H^+$};
\node (C) at (0, -2){$H^-$};
\node (F) at (3, 0){};
\node (M) at (3, 0.5){$v_j$};
\node (N) at (-3, 0.5){$v_i$};
\node (D) at (7, 0){};
\node (E) at (-7, 0){};
\path[-, line width=3]
	(A) edge (F);
\path[->, line width=3]
    (A) edge[color=red] (E)
    (F) edge[color=blue] (D); 
\end{tikzpicture}}
\caption{Illustration of how edges are chosen}
\label{fig:EdgeChoose}
\endminipage\hfill
\minipage{0.5\textwidth}
\centering
\scalebox{.75}{
\begin{tikzpicture}
\filldraw[fill=black!20!white, draw=black, line width=2](2, 0)--(.62, 1.9)--(-1.61, 1.17)--(-1.61,-1.17)--(.62,-1.9)--(2,0);
\node (v1) at (2.2, 0){$v_1$};
\node (v2) at (.62, 2.1){$v_2$};
\node (v3) at (-1.71, 1.27){$v_3$};
\node (v4) at (-1.71,-1.27){$v_4$};
\node (v5) at (.62,-2.1){$v_5$};
\node (C) at (0,0){$C$};
\node (A1) at (2,2.54){$A_1$};
\node (A2) at (-1.81,2.63){$A_2$};
\node (A3) at (-2.99,-.73){$A_3$};
\node (A4) at (-.23, -3.07){$A_4$};
\node (A5) at (2.85,-1.17){$A_5$};

\path[dashed,-, line width=2]
	(2, 0) edge (3.38, 1.9)
    (.62,1.9) edge (-.76,3.8)
    (-1.61,1.17) edge (-3.84, .44)
    (-1.61,-1.17) edge (-1.61,-3.51)
    (.62,-1.9) edge (2.85,-2.63);
\end{tikzpicture}}
\caption{Illustration of how two-dimensional sets are chosen.}
\endminipage\hfill
\end{figure}

Now it suffices to check the statement for nonedge convex sets. Suppose for the sake 
of contradiction that a convex set $C=\conv\{v_1,v_2,\ldots v_k\}$ has the 
property that no $v_i$, $1\le i\le k$, chose the wedge corresponding to~$C$. 
Let $v_1, \dots, v_k$ be ordered in counterclockwise order around the boundary of~$C$. 
For each $i, 1\le i\le k$, let $R_i$ denote the ray $\overrightarrow{v_{i-1}v_i}$, and let $A_i$ 
denote the closed wedge between rays $R_i$ and $R_{i+1}$ with indices taken modulo~$k$. 
The convex set $C$, along with $A_1, \dots, A_k$ then form a partition of the plane.

Since all pairs of nonedge convex sets are assumed to be vertex disjoint, the only other 
sets that could possibly contain the $v_i, 1\le i\le k,$ as vertices are edges. If, for any~$i, 1\le i\le k$, 
all rays from $v_i$ (if there are any) point towards the interior of the region~$A_i$, then the vertex $v_i$ would choose 
the wedge corresponding to~$C$. Furthermore, no ray can point alongside an edge of~$C$, as the 
intersection of that edge with $C$ would necessarily contain two vertices. Therefore, we may 
assume that, for every $i, 1\le i\le k$, some ray either points inside the wedge corresponding to~$C$, 
or points into the union of $R_i$ and the unique open half-plane $H_i$ disjoint from $C$ and whose defining 
line is $\overleftrightarrow{v_{i-1}, v_i}$. There are two cases:

\textbf{Case 1:} For every $i$, some ray at $v_i$ points inside~$C$. 
\\ Observe that no two such rays may meet inside $C$. Otherwise, the intersection of either 
corresponding segment with $C$ would necessarily contain two points in~$W$. It follows that 
the edges corresponding to these rays meet outside of $C$, so that every edge must intersect 
the boundary of $C$ internally. Let any segment from $v_1$ which points inside $C$ intersect 
the boundary of $C$ again at a point~$Y$. 

Since the intersection of this segment and $C$ already contains $v_1\in W$, it follows that $Y$ is not a vertex of $C$, so that it lies on some edge. Let $v_i, i\neq 1$ be one of the vertices of the edge of $C$ containing $Y$. Then any edge with a vertex at $v_i$ must, in order to intersect $\overrightarrow{v_iY}$ outside of $C$, also point outside of~$C$. This contradicts the assumption that every vertex has some ray pointing inwards, so this case is resolved. 

\textbf{Case 2:} For some $i$, there is a ray from $v_i$ which points into~$H_i$. 
\\In this case, no ray from $v_{i-1}$ can point inside $C$, for then this ray and the above ray from $v_i$ would point into opposite sides of the line $v_{i-1}v_i$. It follows that some ray from $v_{i-1}$ points into $H_{i-1}$. Repeating this argument $k-2$ more times, there is some ray $r_i$ for each $i, 1\le i\le k$ which points into $H_i$. Let $\theta_i$ denote the clockwise angle measured between rays $r_i$ and $\overrightarrow {v_iv_{i-1}}$, and let $\gamma_i$ denote the measure of $\angle v_{i+1}v_iv_{i-1}$.

For each $i$, the rays $r_i$ and $r_{i+1}$ must intersect, since the corresponding segments intersect. The condition that $r_i, r_{i+1}$ intersect is exactly the condition that the sum of the clockwise angle measures from $\overline{v_iv_{i+1}}$ to $r_i$ and from $r_{i+1}$ to $\overline{v_{i+1}v_i}$ is less than $\pi$; that is, $(2\pi-\gamma_i-\theta_i)+\theta_{i+1}<\pi$, or $\theta_{i+1}-\theta_i<\gamma_i-\pi$ for each $1\le i\le k$. However, summing these $k$ inequalities cyclically gives:
$$
0=\sum_{i=1}^k(\theta_{i+1}-\theta_i)
 <\sum_{i=1}^k(\gamma_i-\pi)
 =-2\pi
$$
This is a contradiction, so this case is also impossible.

Since a contradiction was derived in all cases, it follows that some vertex from every convex set does in fact choose that convex set. Since each vertex chooses at most one convex set, this mapping forms a natural surjection from a subset of vertices onto $\{C_1, \dots, C_m\}$. It follows that $m\le n$ as required.
\end{proof}

The following theorem shows that Conjecture~\ref{conj:EdgesBoundedByVertices} holds whenever the transversal set~$W$
contains only the vertices and no additional points as in Example~\ref{ex:proj}. This is a purely combinatorial statement 
independent of any geometry of the sets~$C_i$ and ambient space.

\begin{thm} 
\label{thm:comb-thrackle}
	Let $C_1, \dots, C_m$ be sets and suppose there exists a transversal of their pairwise intersections~$W$, 
	that is $|C_i\cap C_j \cap W|=1$ for all $i\ne j$. Then $m \le |W|$.
\end{thm}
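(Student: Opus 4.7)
The plan is to recognize this as (essentially) the non-uniform Fisher inequality, equivalently the set-theoretic form of the de~Bruijn--Erd\H{o}s theorem, and to carry out the standard linear-algebra proof. Set $D_i := C_i \cap W \subseteq W$; the hypothesis becomes $|D_i \cap D_j| = 1$ for all $i \ne j$, so the $D_i$ are subsets of the finite set~$W$. As in Conjecture~\ref{conj:EdgesBoundedByVertices} the $C_i$ are tacitly distinct, and up to a degenerate case mentioned below we may take the $D_i$ distinct as well.

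The core step is to show that the characteristic vectors $v_1, \dots, v_m \in \R^W$ of $D_1, \dots, D_m$ are linearly independent, which immediately gives $m \le |W|$. One has $\langle v_i, v_j \rangle = |D_i \cap D_j| = 1$ for $i \ne j$ and $\langle v_i, v_i \rangle = |D_i|$, so a putative relation $\sum_i c_i v_i = 0$ yields, on pairing with each $v_j$,
$$
(|D_j| - 1)\, c_j + S = 0, \qquad S := \sum_{i=1}^m c_i.
$$
If $|D_j| \ge 2$ for every $j$, then $c_j = -S/(|D_j|-1)$, so all $c_j$ share a common sign; summing forces $S = 0$ and hence $c_j = 0$ throughout. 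At most one $D_j$ can be a singleton, since two distinct singletons in~$W$ cannot meet in a common point; this lone degenerate index is handled directly by the displayed equation, giving $S=0$ and, upon substitution, $c_j = 0$ for the remaining indices too.

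The main obstacle is conceptual rather than computational: the implicit distinctness hypothesis on the $C_i$ has to be pinned down -- without it one may stack arbitrarily many copies of a set meeting $W$ in the same point, violating the bound -- and the singleton corner case needs to be noted separately, since $|D_j| = 1$ is exactly the obstruction to dividing by $|D_j|-1$ in the generic argument. Once these points are settled, the Fisher-style linear-algebra argument closes the proof immediately.
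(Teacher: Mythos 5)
Your proof is correct, but it takes a genuinely different route from the paper. The paper builds the complete graph $K_m$ on the sets, observes that each point of $W$ induces a clique covering exactly the pairs of sets meeting at that point, and then invokes the de~Bruijn--Erd\H{o}s theorem that $K_m$ cannot be decomposed into fewer than $m$ proper complete subgraphs. You instead give a self-contained Fisher-type linear-algebra argument: the characteristic vectors of $D_i = C_i\cap W$ have Gram matrix $J + \mathrm{diag}(|D_i|-1)$, and your sign argument (plus the observation that at most one $D_i$ can be a singleton once the $D_i$ are distinct) shows they are linearly independent in $\R^W$. In effect you are reproving the cited de~Bruijn--Erd\H{o}s result rather than quoting it, which makes your write-up longer but self-contained; the paper's reduction is shorter but leans on an external theorem. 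Two remarks on the comparison. First, your insistence on distinctness is well placed: the theorem as literally stated is false without it (take $C_i = \{w, x_i\}$ with $x_i \notin W$ and $W=\{w\}$), and what is really needed is that the traces $D_i = C_i\cap W$ be distinct --- distinctness of the $C_i$ alone does not give this, so the degenerate case you set aside is a genuine hypothesis rather than something that can be ``handled.'' Second, your singleton corner case is precisely the shadow of the gap in the paper's own argument: de~Bruijn--Erd\H{o}s requires the clique decomposition to consist of \emph{proper} subgraphs, which fails exactly when one point of $W$ lies in every $D_i$ (the near-pencil configuration); under distinctness of the $D_i$ that case is disposed of directly, as in your singleton analysis. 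So your proof is, if anything, the more carefully quantified of the two.
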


\begin{proof}
Create a graph where the vertices represent the sets $C_i$ and there is an edge between the vertices if the two corresponding sets intersect. 
Since every pair of sets must intersect, this graph will be the complete graph on $m$ vertices,~$K_m$.
Any point of the transversal set~$W$ induces a complete subgraph of sets it intersects. Therefore $W$ induces 
a decomposition of the complete graph into proper complete subgraphs. The complete graph $K_m$ cannot be 
decomposed into less than $m$ proper complete subgraphs; see de Brujin and Erd\H{o}s~\cite{erdos1948}. Thus~${m\le |W|}$.
\end{proof}

While this is a purely combinatorial statement, Conjecture~\ref{conj:EdgesBoundedByVertices} has geometric content and
the analogous statement fails in~$\R^3$:

\begin{figure}[h!]
\centering

\begin{tikzpicture}
 \tikzstyle{every node}=[circle, draw, fill=black!50,
                       inner sep=0pt, minimum width=4pt]
                       
\filldraw[fill=black!20!white, draw=black, line width=2](0, -3)--(-1, 1.5)--(3, 0)--(0,-3);

 \path[black,-, line width=2]
    (-1,1.5) edge (1,-1.5)
    (0,3) edge (0,-3)
    (3,0) edge (-3,0);

\filldraw[fill=black!20!white, draw=black, line width=2](0, -3)--(1, -1.5)--(-3, 0)--(0,-3);

 \filldraw[fill=black!20!white, draw=black, line width=2](0, 3)--(3, 0)--(1, -1.5)--(0,3);
\filldraw[fill=black!20!white, draw=black, line width=2](0, 3)--(-1, 1.5)--(-3, 0)--(0,3);

\end{tikzpicture}

\caption{Counterexample to Conjecture~\ref{conj:EdgesBoundedByVertices} in~$\R^3$ on six vertices with seven convex sets.}

\end{figure}
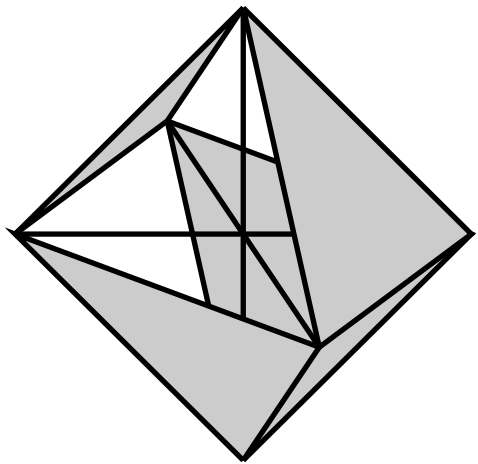

\section{Higher-dimensional thrackles}
\label{sec:high-dim}

\noindent
A $d$-dimensional simplicial complex is \emph{pure} if every face is contained in a $d$-dimensional face.
A pure simplicial complex $K$ of dimension~$d$ is called \emph{$d$-thrackle} if there is a continuous map
${f \colon K \longrightarrow \R^{d+1}}$ such that
\begin{compactenum}[(i)]
	\item the restriction of~$f$ to any facet is an embedding,
	\item any two facets intersect in a $(d-1)$-ball,
	\item intersections between faces are \emph{stable}, that is, there is an $\varepsilon > 0$ such that any 
		homotopy that moves points by at most $\varepsilon$ cannot remove the intersection.
\end{compactenum}

The $(d-1)$-faces of a $d$-thrackle are called \emph{ridges}.
If the map $f$ is linear on each facet then we call $K$ \emph{linear $d$-thrackle}.
The classical case of thrackle graphs corresponds to $1$-thrackles.
Here we prove higher-dimensional extensions of the linear thrackle conjecture:

\begin{thm}
\label{thm:thrackle-highdim}
	A linear $(d-1)$-thrackle with $m$ facets and $n$ ridges satisfies $dm\le 2n$.
\end{thm}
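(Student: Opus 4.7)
The plan is to prove the inequality by induction on~$d$, with the base case $d = 2$ being exactly the linear case of Conway's thrackle conjecture: $m \le n$ for linear thrackles in the plane, as shown by Erd\H{o}s~\cite{erdos1946}.

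For the inductive step, I assume the theorem for linear $(d-2)$-thrackles and let $K$ be a linear $(d-1)$-thrackle in $\R^d$ with $d \ge 3$. For each vertex $v$ of~$K$, I would consider the link $\mathrm{Lk}(v)$, whose abstract facets are the $(d-2)$-simplices $F \setminus \{v\}$ for each facet $F$ of~$K$ containing~$v$. Geometrically, I would realize $\mathrm{Lk}(v)$ in $\R^{d-1}$ by projecting from~$v$ onto a generic hyperplane $H$ not through~$v$; each facet of~$K$ containing~$v$ then maps to an embedded $(d-2)$-simplex in~$H$.

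The key geometric observation is that this realization of $\mathrm{Lk}(v)$ is itself a linear $(d-2)$-thrackle. For any two facets $F_i, F_j$ of~$K$ both containing~$v$, the intersection $F_i \cap F_j$ is a $(d-2)$-ball by hypothesis. Since $v$ is an extreme point of each of the simplices $F_i$ and~$F_j$, it is a vertex of the convex polytope $F_i \cap F_j$, so the vertex figure of~$v$ in this ball is a $(d-3)$-ball. Projecting from~$v$ sends this vertex figure to a $(d-3)$-ball intersection between the projected facets $\tilde F_i, \tilde F_j$ in~$H$, verifying the required intersection condition one dimension lower. Stability and embedding conditions follow for a sufficiently generic choice of~$H$.

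Applying the inductive hypothesis to each $\mathrm{Lk}(v)$ then yields $(d-1) m_v \le 2 n_v$, where $m_v$ and $n_v$ denote the numbers of facets and ridges of~$K$ incident to~$v$. Summing over all vertices and using $\sum_v m_v = dm$ (each facet has $d$ vertices) and $\sum_v n_v = (d-1)n$ (each ridge has $d-1$ vertices) gives $(d-1) d m \le 2(d-1) n$, whence $dm \le 2n$. The main obstacle is the geometric verification that the link is a linear $(d-2)$-thrackle; specifically, choosing~$H$ generically so that projection is an embedding on each facet at~$v$ and every pairwise intersection projects to a stable $(d-3)$-ball. With only finitely many facets at~$v$ and finitely many pairwise intersections, this genericity should cause no trouble, though it needs a careful check.
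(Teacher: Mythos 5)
Your reduction hinges on realizing the link of each vertex $v$ as a \emph{linear} $(d-2)$-thrackle in a hyperplane $H$ by central projection from $f(v)$, and this is the step that fails. For the projection of the whole star of $v$ onto a single hyperplane to be defined and injective, every ray from $f(v)$ into a facet incident to $v$ must cross $H$; equivalently, the directions from $f(v)$ into these facets must all lie in one \emph{open} halfspace bounded by a hyperplane through $f(v)$. The thrackle hypothesis only tells you that the direction cones of the facets at $v$ pairwise intersect (because $f(\sigma_i)\cap f(\sigma_j)$ is a $(d-2)$-ball with $f(v)$ as an extreme point), and pairwise intersecting convex cones need not fit in a common open halfspace: already in $\R^3$, three planar cones at the origin with extreme rays $\{(1,0,0),(-1,1,0)\}$, $\{(1,0,0),(-1,-1,0)\}$, $\{(-1,1,0),(-1,-1,0)\}$ pairwise share a ray but are contained in no open halfspace. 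The standard remedy --- projecting the star radially onto a small sphere around $f(v)$ and then stereographically into $\R^{d-1}$ --- destroys linearity, so neither your inductive hypothesis nor, at the bottom of the induction, Erd\H{o}s's theorem for \emph{straight-line} thrackles applies to the resulting link; this is precisely why the paper runs the link argument only in the continuous setting, and even there only under extra genericity assumptions on~$f$. (Your identification of $\tilde F_i\cap\tilde F_j$ with the projected vertex figure of $F_i\cap F_j$ is correct wherever the projection is defined, since a ray from $f(v)$ meeting the two convex sets $f(\sigma_i)$ and $f(\sigma_j)$, each containing $f(v)$, must meet their intersection; and your final double count is arithmetically fine. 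The gap is the unproved claim that each vertex star is pointed, without which the link is not a linear thrackle in any $\R^{d-1}$.)

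For contrast, the paper avoids links entirely: a pure $(d-1)$-complex in which every ridge lies in at most two facets satisfies $dm\le 2n$ by double counting, so a minimal counterexample contains a ridge $\tau$ lying in three facets $\sigma_1,\sigma_2,\sigma_3$. The hyperplanes spanned by the $f(\sigma_i)$ all contain the $(d-2)$-plane spanned by $f(\tau)$, so one of them, say $H_j$, leaves the other two facets on opposite sides; consequently $\sigma_j$ can share a ridge with other facets only through $\tau$, and deleting $\sigma_j$ removes one facet and $d-1$ ridges, yielding a smaller counterexample --- a contradiction. To rescue your approach you would have to prove that the star of every vertex of a linear thrackle lies in an open halfspace at that vertex, or exhibit a linearity-preserving projection; neither is addressed in your sketch.
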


\begin{proof}
For any pure $(d-1)$-dimensional simplicial complex with $m$ facets and $n$ ridges such that any ridge
is contained in at most two facets we have that $dm \le 2n$ by multiple counting. Suppose there is a 
$(d-1)$-thrackle $K$ with $m$ facets and $n$ ridges such that $dm > 2n$. Further suppose that $K$ is
a minimal counterexample, that is, any $(d-1)$-thrackle with at most $m-1$ facets satisfies the inequality
of the theorem.

The simplicial complex $K$ contains a ridge $\tau$ that is contained in at least three facets. Let $\sigma_1,
\sigma_2, \sigma_3$ be three facets incident to~$\tau$. Fix any affine map ${f \colon K \longrightarrow \R^d}$
that realizes $K$ as a linear $(d-1)$-thrackle. Since $f$ embeds each facet, the $(d-1)$-simplices $f(\sigma_i)$
span affine hyperplanes~$H_i$. These hyperplanes intersect in the $(d-2)$-plane spanned by~$f(\tau)$,
and at most two of the hyperplanes can coincide. Thus at least one of the hyperplanes $H_j$ leaves the~$f(\sigma_i)$,
$i \ne j$, on different sides of it, meaning that $f(\sigma_i) \setminus f(\tau)$, $i \ne j$, are contained in different 
open halfspaces determined by~$H_j$. We claim that $\sigma_j$ is only adjacent to other facets through~$\tau$
and not through any other ridge. This is because any facet~$\sigma$ that shares a ridge with~$\sigma_j$
has its image $f(\sigma)$ entirely contained in one closed halfspace determined by~$H_j$. But unless $\sigma$
contains $\tau$ the $(d-1)$-simplex $f(\sigma)$ cannot intersect both $f(\sigma_i)$, $i \ne j$, in $(d-2)$-balls.

Removing $\sigma_j$ yields a $(d-1)$-thrackle with $m-1$ facets and $n-d+1$ ridges. Now $d(m-1) =dm-d > 2n-d
\ge 2(n-d+1)$ and thus we obtained a counterexample with fewer facets than~$K$, in contradiction to the 
minimality of~$K$.
\end{proof}

Any embedding of the boundary of the $d$-simplex into $\R^d$ is a $(d-1)$-thrackle with $d+1$ facets and $\binom{d+1}{2}$ ridges. 
Thus the bound in Theorem~\ref{thm:thrackle-highdim} is tight in any dimension.
The proof shows that the only examples of $(d-1)$-thrackles, $d \ge 3$, with equality $dm = 2n$ are pseudomanifolds
in the sense that each ridge is contained in precisely two facets.

If in the definition of $d$-thrackle we only require that any two facets intersect in a contractible set instead of
a $(d-1)$-ball, Theorem~\ref{thm:thrackle-highdim} fails to hold in this more general setting: consider a square pyramid
with base $1,2,3,4$ in cyclic order and apex~$5$. Let the set of facets consist of all triangles of the pyramid in addition to
the triangle $1,2,3$ and its three cyclic copies as well as the triangles $1,3,5$ and $2,4,5$. Every pair of facets intersects
in a ball of dimension at most two. There are ten facets and ten ridges, which violates the inequality of 
Theorem~\ref{thm:thrackle-highdim}.

Moreover, for a $(d-1)$-thrackle with $m$ facets, the bound of $m\le |V|$ will not hold in $\R^d$ as can be seen by the counterexample
in the figure below. In this figure, all edges will be extended into triangles to the blue vertex directly above 
the star, and the three marked edges will be extended to triangles with the red vertex above and to the side of the star.

\begin{figure}[h!]
\centering
\begin{tikzpicture}
\label{fig:counterexample-3d}
 \tikzstyle{every node}=[circle, draw, fill=black!50,
                       inner sep=0pt, minimum width=4pt]
                       
        \filldraw[fill=blue] (0, 0) circle (1.2mm);
 
        \node (A) at (360/7:3){};   
        \node (B) at (720/7:3){};
        \node (C) at (1080/7:3){}; 
        \node (D) at (1440/7:3){}; 
        \node (E) at (1800/7:3){}; 
        \node (F) at (2160/7:3){}; 
        \node (G) at (0:3){};
        \node (H) at (2200/7:6){};  
        \node (I) at (2120/7:6){};  
        
        \filldraw[fill=red] (2160/7:4.5) circle (1.2mm);
        
        \path[black,-, line width=2]
    (A) edge (D)
    (D) edge (G)
    (G) edge (C)
    (C) edge (F)
    (F) edge (B)
    (B) edge (E)
    (E) edge (A);
    
    \path[dashed,-, line width=2]
    (F) edge (H)
    (F) edge (I);
    
    \path[red,-, line width=2]
    (A) edge (D)
    (G) edge (C)
    (B) edge (E);
        
\end{tikzpicture}
\caption{Coning all edges to the blue vertex and red edges to the red vertex yields a 
linear $2$-thrackle in~$\R^3$ with ten facets and nine vertices.}
\end{figure}
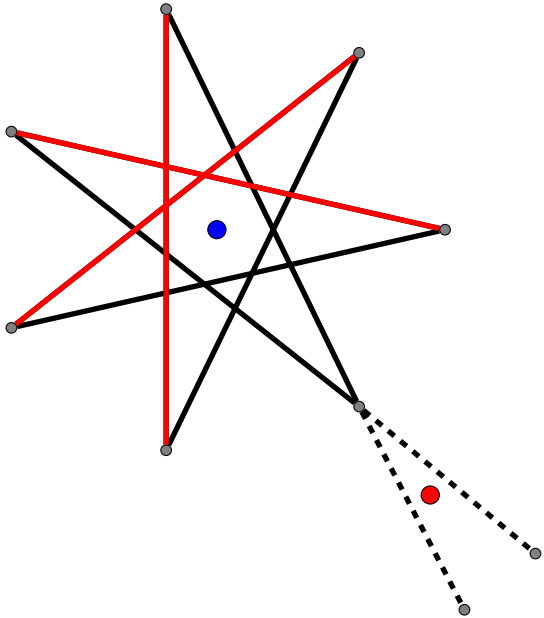

We conjecture the continuous analog of Theorem~\ref{thm:thrackle-highdim}.

\begin{conj}
\label{conj:d-thrackle}
	Let $K$ be a $(d-1)$-thrackle with $m$ facets and $n$ ridges. Then $dm \le 2n$.
\end{conj}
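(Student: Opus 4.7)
The plan is to mimic the induction used in Theorem~\ref{thm:thrackle-highdim}. Take a minimal counterexample $(K,f)$ with $dm>2n$. If every ridge lies in at most two facets then the multiple counting $dm=\sum_\tau r_\tau \le 2n$ gives a contradiction, so some ridge $\tau$ lies in $k\ge 3$ facets $\sigma_1,\dots,\sigma_k$. As in the linear case, it suffices to find some $j$ for which $\sigma_j$ meets no other facet of $K$ across a ridge different from $\tau$: deleting $\sigma_j$ then produces a $(d-1)$-thrackle with $m-1$ facets and $n-d+1$ ridges still violating the inequality, contradicting minimality.

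For the continuous case I would replace the hyperplane-side argument by a local normal-slice analysis. Pick an interior point $p\in f(\tau)$ and take a small $2$-disk $D$ normal to $f(\tau)$ at $p$. Stability of intersections forces the $k$ traces $D\cap f(\sigma_i)$ to be $k$ distinct arcs out of $p$, occurring in some cyclic order around $p$. Among three cyclically consecutive arcs one can be designated \emph{middle} in the sense that the extension of its tangent line across $p$ strictly separates the other two; let $\sigma_j$ be the corresponding facet. If a facet $\sigma$ shared a ridge $\tau'\ne\tau$ with $\sigma_j$ then near the contact point its normal-slice trace would have to sit on one side of $\sigma_j$'s tangent line, but to produce $(d-1)$-ball intersections with the two facets flanking $\sigma_j$ it would have to reach into the other side — a local contradiction.

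The hard part, and where the argument bumps into the same wall that makes Conway's conjecture itself difficult, is passing from this local obstruction to a global one. In the linear case the hyperplane $H_j=\aff(f(\sigma_j))$ extends throughout $\R^d$, so ``one side'' is globally well defined; in the continuous case a facet image may twist so that the local separation has no global meaning. Two tools I would try to bridge the gap are: (i) propagating the local separation along $f(\tau)$ via a tubular neighborhood and a parametrized family of normal disks, using stability to keep the cyclic order of arcs constant; and (ii) building an auxiliary co-oriented hypersurface from $f(\sigma_j)$ and reading the constraint off mod-$2$ intersection numbers with the images of the other facets.

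I expect a full resolution to be very difficult: the $d=2$ case of Conjecture~\ref{conj:d-thrackle} is precisely Conway's thrackle conjecture, so any complete proof is at least as hard as that long-standing open problem. A realistic intermediate target would therefore be to establish the bound under additional tameness hypotheses — $f$ piecewise-linear or smooth with each facet star-shaped from some point, or pseudomanifold thrackles where no ridge lies in more than a bounded number of facets — or to obtain an asymptotic estimate $dm \le (2+o(1))n$, in the spirit of the Lov\'asz--Pach--Szegedy and Fulek--Pach upper bounds for classical thrackles.
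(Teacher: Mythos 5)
This statement is labeled as a conjecture in the paper and the paper offers no proof of it; as you correctly observe, its $d=2$ case is precisely Conway's thrackle conjecture, so no complete argument is available to compare against. Your proposal is therefore rightly only a sketch, and the gap you flag yourself is the genuine, fatal one. The normal-slice argument controls the facet images only in an arbitrarily small neighborhood of a single interior point $p$ of $f(\tau)$, whereas the step of the linear proof you are imitating requires the global statement that every facet $\sigma$ sharing a ridge $\tau'\ne\tau$ with $\sigma_j$ has $f(\sigma)$ contained in one closed halfspace of $\aff(f(\sigma_j))$ --- that is what forces $f(\sigma)$ to miss one of the two flanking facets. In the continuous setting $\tau'$ and its neighborhood may be entirely disjoint from your normal disk $D$, so the ``local contradiction near the contact point'' does not exist: nothing prevents $f(\sigma)$ from leaving the slice, wandering around $f(\sigma_j)$, and meeting both flanking facets in $(d-2)$-balls far from $p$. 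Your tool (i) (propagating along a tubular neighborhood of $f(\tau)$) still only constrains a neighborhood of $f(\tau)$, and your tool (ii) (mod-$2$ intersection numbers) cannot distinguish a single stable $(d-2)$-ball intersection from an even number of crossings, so neither bridges the local-to-global gap. As a sanity check that something global must be used: the deletion step itself is fine combinatorially, but without a replacement for the hyperplane $H_j$ there is no candidate $\sigma_j$ to delete.

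It is worth contrasting your deletion strategy with the partial progress the paper actually records after stating the conjecture. Instead of adapting the facet-deletion induction of Theorem~\ref{thm:thrackle-highdim}, the paper reduces dimension through vertex links: under mild transversality assumptions on $f$, a small sphere about the image of each vertex followed by stereographic projection realizes the link as a thrackle one dimension lower, and double counting over vertex links propagates any bound of the form $dm\le 2cn$ up from the planar case. This shows that Conway's thrackle conjecture implies Conjecture~\ref{conj:d-thrackle} for such maps, and importing the Fulek--Pach bound yields $dm\le 2.856\,n$ for them --- which is essentially the ``asymptotic estimate under tameness hypotheses'' you name as a realistic intermediate target. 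If you want a provable statement rather than a program, that link-reduction route, not the deletion induction, is the one to pursue.
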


The planar case $d=2$ of Conjecture~\ref{conj:d-thrackle} is Conway's thrackle conjecture. 
Under mild assumptions on the map~$f$ we can show that Conway's thrackle conjecture
in fact implies Conjecture~\ref{conj:d-thrackle}: suppose $f\colon K \longrightarrow \R^{d+1}$
realizes $K$ as a $d$-thrackle in such a way that for every vertex~$v$ of~$K$ we can find
a $d$-sphere $S_v$ around~$f(v)$ that intersect all facets incident to~$v$ in $(d-1)$-balls and
for every pair of distinct facets $\sigma$ and $\tau$ incident to~$v$ the intersection
$f(\sigma) \cap f(\tau) \cap S_v$ is a $(d-2)$-ball and stable within~$S_v$. That is, $S_v$ is 
a sphere that is in general position with respect to the image of~$f$ restricted to the star
of~$v$. Then stereographic projection realizes the link of~$v$ as a $(d-1)$-thrackle.

These mild assumptions on $f$ are met, for example, when $f$ embeds each facet $\sigma$ of~$K$ into~$\R^{d+1}$ as a $d$-manifold with boundary (i.e. $f$ can be smoothly extended past~$\partial\sigma$), and stability is strengthened to the condition of these manifolds intersecting transversally. If a vertex $v$ is contained in a face $\sigma$, then $f(v)$ is on the boundary of $f(\sigma)$, so for sufficiently small~$\varepsilon$, the open ball $B_\varepsilon$ of radius $\varepsilon$ centered at $f(v)$ has $B_\varepsilon\cap f(\sigma)$ homeomorphic to a closed half plane in~$\R^{d}$, or equivalently an open $d$-ball with an open $(d-1)$-ball pasted on the boundary. If $S_\varepsilon$ is the sphere of radius $\varepsilon$ centered at~$f(v)$, we have $S_\varepsilon \cap f(\sigma) = (\overline{B_\varepsilon} \cap f(\sigma))-(B_\varepsilon \cap f(\sigma))$. The right-hand side is homeomorphic (via the above homeomorphism) to a closed $d$-ball minus its interior and minus an open $(d-1)$-ball on its boundary, which is homeomorphic to precisely a closed $(d-1)$-ball. Note that for faces $\sigma, \tau$ that contain the vertex~$v$, $f(v)$ is also on the boundary of the $(d-1)$-manifold with boundary $f(\sigma)\cap f(\tau)$, so we can use the same argument to find $\varepsilon$ small so that $S_\varepsilon$ also intersects each $f(\sigma)\cap f(\tau)$ in a $(d-2)$-ball. Furthermore, the condition that $f(\sigma)\cap S_\varepsilon$ and $f(\tau)\cap S_\varepsilon$ intersect transversally on the sphere is equivalent to the condition that $S_\varepsilon$ and $f(\sigma)\cap f(\tau)$ intersect transversally, which is possible for some small perturbation of the sphere since manifold transversality is known to be a generic property; see Chapter 3 of Hirsch~\cite{hirsch2012} for an introduction to transversality.

These assumptions on $f$ allow us to inductively transfer inequalities relating edges and vertices of thrackles in $\R^2$ to inequalities relating facets and ridges of $d$-thrackles in $\R^{d+1}$. Supposing we have the inequality $dm \le 2cn$ between the number of facets~$m$
and the number of ridges~$n$ of a $(d-1)$-thrackle for some constant~$c$, we get the
inequality $(d+1)m \le 2cn$ for $d$-thrackles in $\R^{d+1}$ realized as above by multiple counting: 
let $m$ be the number of facets of~$K$ and $n$ the number of ridges. Denote by $f_{k}(v)$ 
the number of $k$-faces in the link of~$v$, that is $f_{d-1}(v)$ is the number of facets incident 
to~$v$. We have the inequality $df_{d-1}(v) \le 2cf_{d-2}(v)$ for every vertex link.
Summing this inequality over all vertex links yields $(d+1)dm \le 2cdn$.

Thus, when $f$ satisfies the above assumptions, the bound for plane thrackles given by Fulek and Pach~\cite{fulek2010} shows that any $(d-1)$-thrackle with $m$ facets
and $n$ ridges satisfies the inequality $dm \le 2.856n$. A proof of the thrackle conjecture immediately
implies our Conjecture~\ref{conj:d-thrackle} for such $f$ as noted above. High-dimensional versions of this conjecture might be 
simpler to attack since there are more serious restrictions on $(d-1)$-thrackles for $d \ge 3$: for example,
every vertex link has to be a $(d-2)$-thrackle.

\bibliographystyle{amsplain}


\end{document}